\documentclass{amsart}

\usepackage{amsmath, amssymb, latexsym}
\usepackage{epsfig}

\usepackage{tikz}






\usepackage[colorlinks=true,citecolor=blue]{hyperref}
\usepackage{latexsym,amsmath,amsthm,amssymb,amsfonts,amscd,multirow}
\usepackage{epsfig,subfigure,verbatim,graphics}
\usepackage{xcolor}

\usepackage{makecell}

\newtheorem{theorem}{Theorem}[section]

\newtheorem{remark}[theorem]{Remark}

\theoremstyle{definition}
\newtheorem{definition}[theorem]{Definition}
\newtheorem{example}[theorem]{Example}

\begin{document}

\title[Instability of Betti Sequence]
{Instability of the Betti Sequence for Persistent Homology and a Stabilized Version of the Betti Sequence}

\author[Johnson]{Megan Johnson}
\address{Department of Mathematics\\
 University at Buffalo, The State University of New York, Buffalo, NY, 14260\\ USA }
\email{meganjoh@buffalo.edu}

\author[Jung]{Jae-Hun Jung}
\address{Department of Mathematics
  \& POSTECH Mathematical Institute for Data Science (MINDS)\\ Pohang University of Science and Technology, Pohang 37673 \\Korea}
\email{jung153@postech.ac.kr}

\subjclass[2000]{54-08}
\keywords{Topological data analysis; Persistence homology; Vectorization; Stability; Betti sequence}

\begin{abstract}
Topological Data Analysis (TDA), a relatively new field of data analysis, has proved very useful in a variety of applications. The main persistence tool from TDA is persistent homology in which data structure is examined at many scales. Representations of persistent homology include persistence barcodes and persistence diagrams, both of which are not straightforward to reconcile with traditional machine learning algorithms as they are sets of intervals or multisets. The problem of faithfully representing barcodes and persistent diagrams has been pursued along two main avenues: kernel methods and vectorizations. One vectorization is the Betti sequence, or Betti curve, derived from the persistence barcode. While the Betti sequence has been used in classification problems in various applications, to our knowledge, the stability of the sequence has never before been discussed. In this paper we show that the Betti sequence is unstable under the 1-Wasserstein metric with regards to small perturbations in the barcode from which it is calculated. In addition, we propose a novel stabilized version of the Betti sequence based on the Gaussian smoothing seen in the Stable Persistence Bag of Words for persistent homology. We then introduce the normalized cumulative Betti sequence and provide numerical examples that support the main statement of the paper. 
\end{abstract}

\maketitle

\section{Introduction}
\label{intro}
%
Topological Data Analysis (TDA) is rising field useful for the analysis of high-dimensional data structure \cite{carlsson:2009}. The popularly used tool in TDA is persistent homology, introduced by Edelsbrunner et al. in 2002 \cite{Edelsbrunner2002}, where snapshots of the topological structure of the data set are taken at many different scales and the results are compared from one scale to the next. Singular homology is, in general, hard to compute, and almost impossible to compute for real-world data but when the given topological space is approximated from finitely many points, simplicial homology, can instead be used as it is easily computable. Persistent homology is obtained by computing simplicial homology at different scales. 
Previous applications of TDA and persistent homology include 3D shape segmentation \cite{carriere:3Dshapes}, astrophysics \cite{cole_shiu,heydenreich2020,Xu2019}, biology and medicines \cite{McGuirl2020,Nicponski2020}, and neuroscience \cite{bendich2016, Sizemore:2019} to name a few.

Popular representations of persistent homology include persistent diagrams and persistent barcodes. Persistent barcodes and persistent diagrams are mathematically equivalent and they demonstrate how the homological structures of the given data change according to scale. Although these are useful for data analysis, they are not necessarily compatible with typical machine learning workflows in their raw forms as they are designed as multisets and collections of intervals, respectively. One way to reconcile these persistent homology representations and machine learning algorithms is to vectorize the persistent diagrams or the persistent barcodes \cite{adams2015persistence}. There are several vectorization methods including the topological vector, persistence vector, and persistence images. These vectorization methods are, in general, more computationally efficient than kernel methods. The construction of the topology vectors and persistence vectors are straightforward and easy to implement. The persistence image is less straightforward (i.e. slower) to compute but delivers better classification results, in general.  Another vectorization that we consider in this paper is the Betti sequence \cite{Umeda2017} which contains the Betti numbers of the homology groups of the simplicial complex built on the data set at all scales of the persistent homology. 

In this paper, we prove by example that the Betti sequence is unstable with respect to the 1-Wasserstein distance. In other words, a small change in a persistent diagram leads to a large change in the 1-Wasserstein norm of the Betti Sequence.  To our knowledge, the instability of the Betti sequence, although mentioned in \cite{chung2020persistence}, has not yet been explicitly shown. In practice, such large change may not be significant if finer filtration intervals are chosen but to remedy the instability in the Betti sequence we propose a new stabilized version and prove its stability. In the stabilization, we adopt a similar Gaussian-smoothing approach as in \cite{johnson:2020,zielinski2018persistence}. In this paper, we show numerical examples that support our statement and show the validation of the proposed stabilization of the Betti sequence. 

This paper will be organized as follows: Section 2 will cover the background of persistent homology and its representations and it will cover the definition and instability of the Betti sequence. Section 3 will introduce our proposed stabilization of the Betti sequence and provide a proof of its stability. Section 4 will demonstrate the normalized cumulative Betti sequence on various data sets. Section 5 contains our concluding remarks.

\section{Persistent Homology and Betti Sequence}

The main persistent homology representations that we consider in this paper are the persistence diagram and the persistence barcode.  Both are equivalent and each one can be recovered from the other while each can be used differently in terms of numerical manipulations. Let $X$ be a given topological space. Singular homology describes the homological structure of $X$ with the $n$-dimensional homology group $H_n$. The homology group, $H_n$ is defined by $H_n = Ker(\delta_n)/Im(\delta_{n+1})$, which is the quotient group of the kernel, $Ker(\delta_n)$, and image groups, $Im(\delta_{n+1})$, of the boundary map, $\delta_n$: $C_n(X) \rightarrow C_{n-1}(X)$ where $C_n(X)$ is the free abelian group whose basis is the set of singular $n$-simplices in $X$. Roughly speaking, the rank of the $n$-dimensional homology group $H_n$, called the $n$th Betti number, indicates how many $n$-dimensional holes are there in $X$. This information is useful in understanding the topological structure of $X$. However, computing $H_n$ is not easy as $X$ is an arbitrary topological space in general. In fact, it is not practical to use for $X$ from real-world applications. Thus, in order to use homological features of $X$ for data analysis, instead of using $X$ directly, we use a point cloud sampled from $X$.

We use typical building algorithms to obtain the point cloud approximation of $X$, known as the simplicial complex $K$. There are various ways of constructing $K$ including the Vietoris-Rips complex, $V(\tau)$ where the non-negative real number $\tau$ is known as the filtration parameter. With the given value of $\tau$, $V(\tau)$ is constructed by gluing simplices whose pairwise distance is within $\tau$. However, it is not known which value of $\tau$ approximates $X$ best. For this reason, we construct $V$ for various $\tau$, which gives us the notion of {\it persistence}. The $n$th homology, $H_n$ corresponding to $V(\tau)$ can be defined similarly as above. Let $\beta_n$ be the Betti number for $H_n$. $\beta_0$ represents the number of connected components in $K$ and $\beta_n$  the number of $n$-dimensional cycles or holes. As we have the natural inclusion of $V(\tau_i)  \hookrightarrow V(\tau_j) $, $\tau_i \le \tau_j$  and a homomorphism $H_n(V(\tau_i)) \rightarrow H_n(V(\tau_j))$, we have the relation between $\beta_n$ versus $\tau$, which generates the graph of the persistent barcodes in the considered $n$-dimension. On the persistent barcodes, an interval of filtration values corresponding to the same $\beta_n$ is known as a bar and indicates the $n$-dimensional hole structure of $K$. If we call the starting point of each bar the {\it birth} and the ending point the {\it death} we can create the persistent diagram multiset by considering all points of the form $(birth, death)$. Vectorizations of persistent diagrams and persistent barcodes and their stability are key aspects that we consider in this paper. 

\subsection{Definition of the Betti Sequence}

As explained above, now we consider $K$ instead of $X$ by sampling finitely many distinct points from $X$. 
Recall the Betti number, $\beta_n$,  is the rank of the $n$th dimensional homology group $H_n(K,R)$. More specifically, $\beta_0$ represents the number of connected components in $K$, $\beta_1$ the number of $1$-dimensional holes, etc. As we have the natural inclusion of our filtered simplicial complex $\{VR_{\tau_i}\}$, namely $i: VR_{\tau_i}  \hookrightarrow VR_{\tau_j}$, $\tau_i \le \tau_j$, we have a homomorphism $H_n(VR_{\tau_i}) \rightarrow H_n(VR_{\tau_j})$. This defines a relationship between $\beta_n$ and $\tau$, which is used to generate the persistence barcode of dimension $n$. 

The Betti sequence, or Betti curve, originally defined in 2017, is the vectorization of Betti numbers obtained in persistent homology \cite{Umeda2017}. The Betti sequence uses the persistence barcode and a discretization of the filtration interval to define the vectorization. At each value of $\tau$ in the discretization, we count the number of generators existing at that filtration and that is our vector entry. We provide the formal definition of the Betti sequence below. 

\begin{definition}
Given a persistence barcode of dimension $n$ with finitely many persistence intervals and a maximum filtration $\tau_{max} > 0$, let $\{\tau_i\}_{i =1}^{M}$ be equally spaced points in $[0,\tau_{max}]$. Let $\vec{v}_b = \left(v_i\right)^{M}_1$ be the vector whose entries $v_i$ count the number of persistence intervals in the barcode existing for the filtration value $\tau_i$. \end{definition}

This definition suffers from the following flaw: persistence bars, if they fall entirely in between discretization values, are not counted at all and have no impact on the Betti sequence. Note that if the mesh size is exceedingly small then any bar which falls entirely in between the tau values is likely due to noise and their omission might not be an issue. However, if the mesh size is chosen poorly this could result in a major loss of information. 

We propose an alternate definition of the Betti sequence which agrees with the original definition in the limit as the mesh size goes to zero. 

\begin{definition}
Given a persistence barcode of dimension $n$ with finitely many bars and a maximum filtration $\tau_{max} > 0$, divide the interval $[0,\tau_{max}]$ into $M$ equal subintervals of length $\Delta \tau = \frac{\tau_{max}}{M}$. 
Let $\vec{v}_b = \left(v_i\right)^{M}_1$ be the vector whose entries $v_i$ count the number of bars in the barcode that exist for at least one filtration value $\tau$ in the $i$th subinterval $\left(\tau_{i-1} , \tau_{i}\right)$ of the filtration interval $[0,\tau_{max}]$. \end{definition}

This alternate definition has the added benefit of being easily translatable to the language of persistence diagrams making the study of the stability of the Betti sequence with respect to the p-Wasserstein metric possible. 

\subsection{Redefining the Betti Sequence via the Persistence Diagram}
In order to discuss stability with respect to the Wasserstein metric, we need to redefine the Betti sequence in terms of the persistent diagram. Consider a persistent diagram $B$ with finitely many off-diagonal points. Then a bar on the corresponding persistent barcodes exists at some filtration value $\tau$ in the subinterval $\left( \tau_{i -1 }, \tau_{i}\right)$ if and only if its birth-death point on the persistent diagram falls in the shaded region, call it $D_i$, illustrated in Figures \ref{fig:shadedregion} and \ref{fig:shadedregion1}

\begin{figure}[!htb]
\minipage{0.99\textwidth}

\begin{center}
\begin{tikzpicture}[scale=0.5]
\draw[black, thick] (0,0) -- coordinate (x axis mid) (8,0);
\draw[black, thick] (0,0) -- coordinate (y axis mid) (0,8);
\draw[black] (0,0) --(8,8);

	\node[below=0.3cm] at (x axis mid) {Birth};
	\node[rotate=90, above=0.5cm] at (y axis mid) {Death};

\filldraw[color=white, fill=gray!20, ultra thin, dashed] (0,2) -- (2,2) -- (4,4) -- (4,8) --
(0,8) --(0,2) -- cycle;

\draw[gray, thick, dashed] (0,2) -- (2,2);
\draw[gray, thick, dashed] (2,2) -- (4,4);
\draw[gray, thick, dashed] (4,4) -- (4,8);
\draw[gray, thick, dashed] (0,2) -- (0,8);

\filldraw[black] (2,0) circle (1pt) node[anchor=north] {\small $\tau_{i-1}$};
\filldraw[black] (0,2) circle (1pt) node[anchor=east] {\small $\tau_{i-1}$};
\filldraw[black] (4,0) circle (1pt) node[anchor=north] {\small $\tau_{i}$};
\filldraw[black] (0,4) circle (1pt) node[anchor=east] {\small $\tau_{i}$};

\filldraw[black] (8,8) circle (0.1pt) node[anchor = west] {\large $\Delta$};
\node at (2,4) {\large $D_i$};

\end{tikzpicture}
 \end{center}

\endminipage\hfill
%
%
%
%
%
%
%
%
%
\caption{The shaded region, $D_i$, of the persistence diagram corresponding to the filtration interval $[\tau_{i-1}, \tau_i]$ on a barcode. Here $\Delta$ is the diagonal.} \label{fig:shadedregion}
\end{figure}
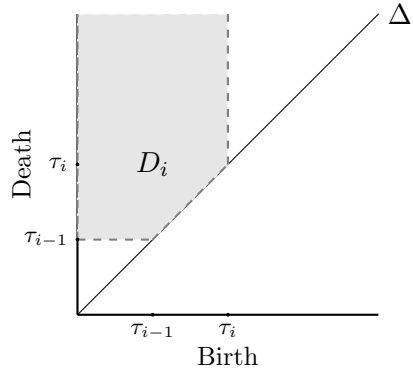

\begin{figure}[!htb]

\minipage{0.49\textwidth}

\begin{tikzpicture}[scale=0.6]
\draw[black, thick] (0,0) -- coordinate (x axis mid) (8,0);
\draw[black, thick] (0,0) -- coordinate (y axis mid) (0,8);
\draw[black] (0,0) --(8,8);

	\node[below=0.3cm] at (x axis mid) {Birth};
	\node[rotate=90, above=0.5cm] at (y axis mid) {Death};

\filldraw [color=blue!60,fill=blue!10, ultra thin, dashed] (0,2) rectangle (2,4);
\filldraw [color=white!60,fill=orange!10, ultra thin, dashed] (0,4) rectangle (2,8);
\filldraw [color=white!60,fill=red!10, ultra thin, dashed] (2,4) rectangle (4,8);
\filldraw[color=green, fill=green!10, ultra thin, dashed] (2,2) -- (4,4) -- (2,4) -- cycle;

\draw[color=red!60, ultra thin, dashed] (4,4) -- (4,8);
\draw[color=red!60, ultra thin, dashed] (2,4) -- (2,8);
\draw[color=red!60, ultra thin, dashed] (2,4) -- (4,4);
\draw[color=orange!60, ultra thin, dashed] (2,4) -- (2,8);
\draw[color=orange!60, ultra thin, dashed] (0,4) -- (0,8);
\draw[color=orange!60, ultra thin, dashed] (0,4) -- (2,4);

\filldraw[black] (1,0) circle (1pt) node[anchor=north] {\small $\tau_{i-1}$};
\filldraw[black] (0,1) circle (1pt) node[anchor=east] {\small $\tau_{i-1}$};
\filldraw[black] (2,0) circle (1pt) node[anchor=north] {\small $\tau_{i}$};
\filldraw[black] (0,2) circle (1pt) node[anchor=east] {\small $\tau_{i}$};
\filldraw[black] (1,2) circle (1pt) node[anchor=south] {\small ($\tau_{i-1}, \tau_{i})$};

\filldraw[black] (8,8) circle (0.1pt) node[anchor = west] {\large $\Delta$};

\end{tikzpicture}

\endminipage\hfill

\caption{The shaded region, $D_i$, of the persistence diagram corresponding to the filtration interval $[\tau_{i+1}, \tau_i]$ on a barcode for four different cases. Here $\Delta$ is the diagonal.} \label{fig:shadedregion1}
\end{figure}
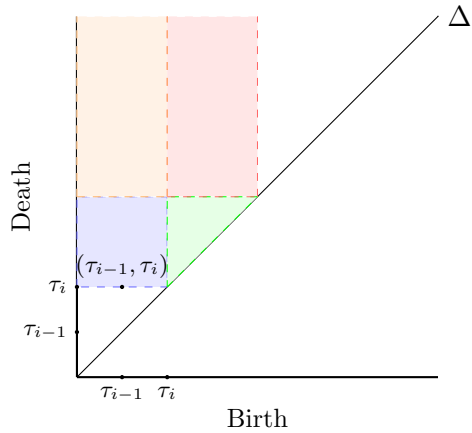

More precisely, a bar on a barcode exists at some filtration value $\tau$ in the subinterval $\left( \tau_{i -1 }, \tau_{i}\right)$ if and only if its birth-death point on the persistence diagram falls in one of the following four subregions of $D_i$:

\begin{itemize}
\item If the birth-death point lies in the red region, then the bar begins after $\tau_{i - 1}$ and ends after $\tau_{i}$. 

\item If the birth-death point lies in the orange region, then the bar begins before $\tau_{i - 1}$ and ends after $\tau_{i}$. 

\item If the birth-death point lies in the green region, then the bar begins after $\tau_{i - 1}$ and ends before $\tau_{i}$. 

\item If the birth-death point lies in the blue region, then the bar begins before $\tau_{i - 1}$ and ends before $\tau_{i}$. 

\end{itemize}

Let us now make a precise description of the  $D_i$ as a subset of $\mathbb{R}^2$ (with multiplicity). Let $D_i = K_i \setminus C_i$ where
%
$$
K_{i} = \{ (b,d) \in \mathbb{R}^2 \, \vert \quad 0 < b < \tau_{i},  d > \tau_{i - 1} \} \hspace{0.1in}
\hspace{0.2 in }
$$
and 
$$
C_{i} = \{ (b,d) \in \mathbb{R}^2 \, \vert \quad \tau_{i - 1} \leq b \leq \tau_{i},  \tau_{i -1} < d < b \}
$$


\begin{definition} Let $B$ be a persistence diagram with finitely many off-diagonal points. The Betti sequence of $B$ is defined as $$\vec{v}\,(B) = \left(v_i\right)_{1}^N \in \mathbb{R}^N$$ where $v_i =| D_i \bigcap B|$ is the cardinality of the intersection of $D_i$, described above, and the persistence diagram $B$. 
\end{definition}

\subsection{Instability of the Betti Sequence}

Recall the definition of the $p$-Wasserstein distance between persistence diagrams \cite{wasserstein:1969}.

\begin{definition}\label{wasserstein} The $p$-Wasserstein distance between persistence diagrams $X$ and $Y$ is
\begingroup
\small
$$W_p(X,Y) = \left[ \inf_{\eta:X \to Y} \sum_{x \in X} || x - \eta(x)||_{\infty}^p \right]^{1/p}$$
\endgroup
where $\eta:X \to Y$ is a partial matching of $X$ and $Y$. Note as $p \to \infty$ this distance becomes the {\it bottleneck distance}. 
\end{definition}

\begin{theorem}
Let $B$ and $B'$ be persistence diagrams containing only finitely many off-diagonal points. The Betti sequence is \textbf{not stable} with respect to the 1-Wasserstein distance. That is, there exists persistence diagrams $B$ and $B'$ such that there does not exist a non-negative constant $C$ such that 
$$ || \vec{v}\,(B) - \vec{v}\,(B') ||_{\infty} \leq C \cdot W_1(B,B') $$
\end{theorem}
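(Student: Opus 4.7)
The plan is to exhibit an explicit one-parameter family of diagram pairs $(B, B'_\epsilon)$ for which the 1-Wasserstein distance can be driven to zero while the $L_\infty$ difference of the Betti sequences stays bounded below by a positive constant; this immediately rules out any uniform stability constant $C$. The most economical construction is to fix any discretization subinterval $(\tau_{i-1},\tau_i)$, pick $b$ with $\tau_{i-1} < b < b+\epsilon < \tau_i$, and set $B = \emptyset$ together with $B'_\epsilon = \{(b, b+\epsilon)\}$. Intuitively, $B'_\epsilon$ is obtained from $B$ by inserting a single, very short bar near the diagonal but inside one specific filtration cell, which is precisely the situation the Betti-count definition handles badly.

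On the Wasserstein side, the only partial matching available sends the lone off-diagonal point of $B'_\epsilon$ to its $L_\infty$-nearest diagonal point, which lies at distance $\epsilon/2$. Thus $W_1(B, B'_\epsilon) = \epsilon/2$. On the Betti-sequence side, the explicit description $D_i = K_i \setminus C_i$ given just before the definition of $\vec{v}$ shows directly that $(b, b+\epsilon) \in D_i$, and the bounds $\tau_{i-1} < b$ and $b + \epsilon < \tau_i$ together with $d = b + \epsilon < \tau_i$ imply $(b, b+\epsilon) \notin D_j$ for $j \neq i$: for $j < i$ the condition $b < \tau_j$ fails, and for $j > i$ the condition $d > \tau_{j-1}$ fails. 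Consequently $\vec{v}(B'_\epsilon)$ has a single entry equal to $1$ in coordinate $i$ while $\vec{v}(B)$ is zero, and hence $|| \vec{v}(B) - \vec{v}(B'_\epsilon) ||_\infty = 1$ for every $\epsilon > 0$. The ratio is $2/\epsilon$, which diverges as $\epsilon \to 0^+$, so no finite $C$ can satisfy the inequality for the entire family.

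The main subtlety, really the only nontrivial step, is making precise the convention behind the partial matching in Definition \ref{wasserstein}: unmatched off-diagonal points must be charged their $L_\infty$-distance to the diagonal $\Delta$, otherwise pairing a nonempty diagram with the empty one is undefined. Should a reader prefer both diagrams be nonempty, the same argument goes through verbatim by placing a single fixed point $(b_0,d_0)$ well away from the diagonal and from the cell $(\tau_{i-1},\tau_i)$ in both $B$ and $B'_\epsilon$, and computing with the matching that fixes $(b_0,d_0)$ and sends $(b,b+\epsilon)$ to $\Delta$. As an optional amplification, packing $N$ disjoint short bars $(b_k,b_k+\epsilon)$ inside a single subinterval gives $||\vec{v}(B) - \vec{v}(B'_\epsilon)||_\infty = N$ with $W_1(B,B'_\epsilon) = N\epsilon/2$, so the ratio $2/\epsilon$ persists while the absolute jump in the Betti sequence can be made arbitrarily large, underscoring how severe the instability is.
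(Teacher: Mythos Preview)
Your proof is correct, but it exploits a different mechanism than the paper's. The paper constructs two nonempty diagrams $B$ and $B'$ each containing $N$ off-diagonal points, one in each triangular region near the diagonal, and then perturbs a single death coordinate by $\epsilon$ so that one point slides across the grid line $d=\tau_j$ into the adjacent region $D_{j+1}$; this changes one coordinate of the Betti sequence from $1$ to $2$ while $W_1(B,B')=\epsilon$. You instead compare the empty diagram with a single $\epsilon$-short bar near the diagonal, so the instability comes from birth at the diagonal rather than from crossing a subinterval boundary. Your construction is more economical (one point instead of $N$) and makes the blow-up ratio $2/\epsilon$ explicit with essentially no computation; the paper's construction has the advantage that the optimal matching $\eta$ pairs off-diagonal points with off-diagonal points, so it sidesteps entirely the convention you flag about charging unmatched points their distance to $\Delta$. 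Your remark that one can plant a fixed common point to force both diagrams nonempty is a clean way to neutralize that concern, and your amplification with $N$ short bars is a nice addendum the paper does not make.
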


\begin{proof}

We will prove by example. Let $B$ be a persistence diagram with finitely many off-diagonal points and with maximum filtration $\tau_{max}$. Suppose further, for diagram $B$, that the number of subintervals, $N$, of $[0, \tau_{max}]$ is fixed so that there exists exactly one birth-death point $x_i = (b_i,d_i)$ with $\tau_{i-1} < b_i,d_i < \tau_i$ in each of the non-overlapping parts of the regions $D_i$, described above, for $1 \leq i \leq N$ as seen in Figure \ref{fig:proofdiagram}. Fix an index $j$ and for any $\epsilon > 0$ let $d_j = \tau_{j} - \frac{\epsilon}{2}$. The Betti sequence vector, $\vec{v} \,(B)$,  of this persistence diagram is then, by definition, given by the following 
$$\vec{v} \,(B) = \langle 1,...,1,1,1,...,1 \rangle^{T}$$

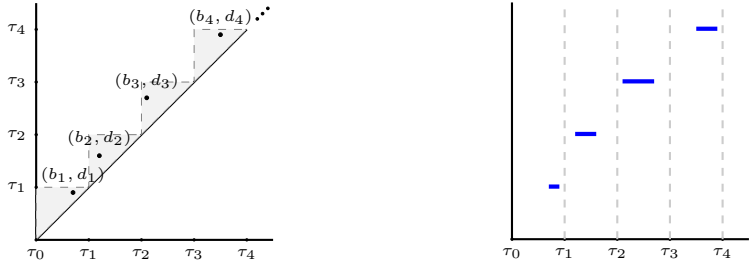
\begin{figure}[!htb]

\minipage{0.49\textwidth}

\centering
\begin{tikzpicture}[scale=0.7]
\draw[black, thick] (0,0) -- (4.5,0);
\draw[black,  thick] (0,0) -- (0,4.5);
\draw[black,  thick] (0,0) -- (4,4);

\filldraw[color=gray, fill=gray!10, ultra thin, dashed] (0,0) -- (1,1) -- (0,1) -- cycle;
\filldraw[color=gray, fill=gray!10, ultra thin, dashed] (1,1) -- (2,2) -- (1,2) -- cycle;
\filldraw[color=gray, fill=gray!10, ultra thin, dashed] (2,2) -- (3,3) -- (2,3) -- cycle;
\filldraw[color=gray, fill=gray!10, ultra thin, dashed] (3,3) -- (4,4) -- (3,4) -- cycle;

\filldraw[black] (0.7,0.9) circle (1pt) node[anchor=south] {\tiny $(b_1,d_1)$};
\filldraw[black] (1.2,1.6) circle (1pt) node[anchor=south] {\tiny $(b_2,d_2)$};
\filldraw[black] (2.1,2.7) circle (1pt) node[anchor=south] {\tiny $(b_3,d_3)$};
\filldraw[black] (3.5,3.9) circle (1pt) node[anchor=south] {\tiny $(b_4,d_4)$};

\filldraw[black] (4.2,4.2) circle (0.7pt);
\filldraw[black] (4.3,4.3) circle (0.7pt);
\filldraw[black] (4.4,4.4) circle (0.7pt);

\filldraw[black] (0,0) circle (0.6pt) node[anchor=north] {\tiny $\tau_{0}$};
\filldraw[black] (1,0) circle (0.6pt) node[anchor=north] {\tiny $\tau_{1}$};
\filldraw[black] (2,0) circle (0.6pt) node[anchor=north] {\tiny $\tau_{2}$};
\filldraw[black] (3,0) circle (0.6pt) node[anchor=north] {\tiny $\tau_{3}$};
\filldraw[black] (4,0) circle (0.6pt) node[anchor=north] {\tiny $\tau_{4}$};

\filldraw[black] (0,1) circle (0.6pt) node[anchor=east] {\tiny $\tau_{1}$};
\filldraw[black] (0,2) circle (0.6pt) node[anchor=east] {\tiny $\tau_{2}$};
\filldraw[black] (0,3) circle (0.6pt) node[anchor=east] {\tiny $\tau_{3}$};
\filldraw[black] (0,4) circle (0.6pt) node[anchor=east] {\tiny $\tau_{4}$};

\end{tikzpicture}

\endminipage\hfill
\minipage{0.49\textwidth}

\centering
\begin{tikzpicture}[scale=0.7]
\draw[black, thick] (0,0) -- (4.5,0);
\draw[black,  thick] (0,0) -- (0,4.5);

\draw[blue,  ultra thick] (0.7,1) -- (0.9,1);
\draw[blue,  ultra thick] (1.2,2) -- (1.6,2);
\draw[blue,  ultra thick] (2.1,3) -- (2.7,3);
\draw[blue,  ultra thick] (3.5,4) -- (3.9,4);

\filldraw[black] (0,0) circle (0.6pt) node[anchor=north] {\tiny $\tau_{0}$};
\filldraw[black] (1,0) circle (0.6pt) node[anchor=north] {\tiny $\tau_{1}$};
\filldraw[black] (2,0) circle (0.6pt) node[anchor=north] {\tiny $\tau_{2}$};
\filldraw[black] (3,0) circle (0.6pt) node[anchor=north] {\tiny $\tau_{3}$};
\filldraw[black] (4,0) circle (0.6pt) node[anchor=north] {\tiny $\tau_{4}$};

\draw[gray!40, thick, dashed] (1,0) -- (1,4.4);
\draw[gray!40, thick, dashed] (2,0) -- (2,4.4);
\draw[gray!40, thick, dashed] (3,0) -- (3,4.4);
\draw[gray!40, thick, dashed] (4,0) -- (4,4.4);

\end{tikzpicture}

\endminipage\hfill
\caption{Left: The persistence diagram where all persistence points exist in the non-overlapping regions of the $D_i$ (the shaded triangles). Right: The corresponding persistence barcode.} \label{fig:proofdiagram}

\end{figure}

Now consider another persistence diagram $B'$ with finitely many off-diagonal points with the same maximum filtration, $\tau_{max}$ and the same number of intervals, $N$. Suppose $B'$ is almost an exact copy of $B$ except that $d_j$ has been shifted by $\epsilon$ to become $d_j ' = d_j + \epsilon = \tau_{j} + \frac{\epsilon}{2}$. Then the persistence point $x_j = (b_j, d_j')$ is in $D_{j + 1}$ and the Betti sequence for $B'$ is
$$\vec{v}\,(B') = \langle 1, ..., 1, 2, 1, ..., 1 \rangle^{T}$$
where $ v_{j} = 1, v_{j + 1} = 2$, and $v_{j + 2} = 1$. 
For the Betti sequence vectorization to be stable with respect to the 1-Wasserstein distance under the small perturbation of $\epsilon$ we need a non-negative constant $C$ such that
$$|| \vec{v}\,(B) - \vec{v}\,(B') ||_{\infty} \leq C \cdot W_1(B,B')$$
Clearly, $|| \vec{v}\,(B) - \vec{v}\,(B') ||_{\infty} = 1$ and if we recall the definition of the 1-Wasserstein distance
$$W_1(B,B') = \inf_{\eta:B \to B'} \sum_{x \in B} ||x - \eta(x)||_{\infty}$$
where $\eta$ is a partial matching of $B$ and $B'$, we know that $W_1(B,B') = || (b_j, d_j) - (b_j, d_j') ||_{\infty} = \epsilon$.
Thus for stability we need a non-negative constant $C$ such that $1\leq C \cdot \epsilon \iff \frac{1}{\epsilon} \leq C$.
However, as $\epsilon > 0$ can be made arbitrarily small, there does not exist such a constant $C$. Therefore the Betti sequence is unstable with respect to the 1-Wasserstein distance.
\end{proof}

\begin{remark}
Similarly, it can be shown that the Betti sequence is unstable with respect to the Wasserstein distance with $p \rightarrow \infty$, i.e. with respect to the bottleneck distance. 
\end{remark}
\section{Stablized Betti Sequence}
We now propose a stabilized version of the Betti sequence inspired by the Gaussian smoothing techniques seen in \cite{ johnson:2020, zielinski2018persistence} and prove its stability with respect to the 1-Wasserstein distance.

\begin{definition}
Suppose $B$ is a persistence diagram with $M$ off-diagonal points $x_j = (b_j,d_j)$ and maximum filtration $\tau_{max}$. Divide the interval $[0, \tau_{max}]$ into $N$ equal subintervals of the form $[\tau_{i-1}, \tau_{i}]$ each of length $\Delta \tau$ as above. Let $\{ G_i ; \mu_i, \Sigma_i\}_{i = 1}^{N}$ be a collection of Gaussian distributions where $\mu_i$, the mean of $G_i$, is chosen to be $\langle \tau_{i-1},\tau_{i} \rangle^T$ and where $\Sigma_i$ is the covariance matrix for $G_i$. 
Define $w_i$ to be $w_i = \frac{1}{N}$ and note that each $w_i > 0$ and $\sum_{i = 1}^{N} w_i = 1$. Then the stable Betti Sequence vector is defined by
\begingroup
\small
$$ \vec{v}^{\text{ s}} = \left( v_i^{\text{ s}} = w_i  \sum_{j =1}^{M} p_i(x_j\,|\, \mu_i,\Sigma_i)\right)_{i = 1}^{N}$$
\endgroup
where
\begin{eqnarray}
p_i(x_j \, | \, \mu_i,\Sigma_i) &=& \frac{\exp\left(-\frac{1}{2}(x_j - \mu_i)^T \Sigma_i^{-1}(x_j - \mu_i)\right)}{2\pi |\Sigma_i|^{1/2}} \nonumber \\ &=& \frac{\exp\left(-\frac{1}{2\Delta \tau} ||x_j - \mu_i||_2\right)}{2 \pi \Delta \tau} \nonumber
\end{eqnarray}
is the probability density function of Gaussian $G_i$ at $x_j$. Note that $|\Sigma_i|$ is the determinant of the covariance matrix, $\Sigma_i$. 
\end{definition}

\begin{remark} The choice of $\Sigma_i$ is still an open question. Ideally, we would want to use a sharp Gaussian and so $\Sigma_i$ should be defined so that $p_i(x_j \,|\, \mu_i, \Sigma_i)$ is essentially the same for every persistence point $x_j$ in $D_i$ not ``near" the boundary of $D_i$. The choice of $\Sigma_i$ should be further studied in future work. 
\end{remark}


\begin{theorem}\label{thm:stability}
Let $B$ be a persistence diagram of finite size and $B'$ be the persistence diagram obtained by perturbing $B$ by an arbitrary $\epsilon > 0$ such that $W_1(B,B') \leq \epsilon$. Then there exists a non-negative constant $C < \infty$ for any $\epsilon$ such that  
$$||\vec{v}^{\text{ s}}(B) - \vec{v}^{\text{ s}}(B')||_{\infty} \leq C \cdot W_1(B,B')$$
\end{theorem}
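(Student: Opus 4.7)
The plan is to exploit the fact that each component $v_i^{\text{ s}}$ of the stabilized Betti sequence is a finite sum of Gaussian density evaluations, weighted by $w_i=1/N$, and Gaussian densities on $\mathbb{R}^2$ are globally Lipschitz. Once that Lipschitz bound is in hand, I can transfer it across the optimal partial matching that realizes $W_1(B,B')$ and pick up only a factor that is independent of the perturbation size $\epsilon$.

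First, I would establish an auxiliary Lipschitz estimate: for each $i$ there is a finite constant $L_i=L_i(\Delta\tau)$ such that
\[
|p_i(x\,|\,\mu_i,\Sigma_i) - p_i(y\,|\,\mu_i,\Sigma_i)| \;\le\; L_i\,\|x-y\|_\infty \qquad \text{for all } x,y\in\mathbb{R}^2.
\]
This follows from differentiating the Gaussian density and noting that $\nabla p_i(x)=-\Sigma_i^{-1}(x-\mu_i)\,p_i(x)$ has norm bounded uniformly in $x$ (the prefactor $\|x-\mu_i\|$ is tamed by the exponential decay), so that $L_i$ depends only on $\Sigma_i$ and hence on $\Delta\tau$. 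Let $L:=\max_{1\le i\le N} L_i$, which is finite because $N$ is finite.

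Next, let $\eta:B\to B'$ be a partial matching attaining (or approaching to within $\epsilon$) the infimum in the definition of $W_1(B,B')$. Fixing a coordinate $i$, I would write
\[
v_i^{\text{ s}}(B)-v_i^{\text{ s}}(B') \;=\; w_i\!\!\sum_{x\in B}\!\bigl[p_i(x)-p_i(\eta(x))\bigr] \;+\; w_i\,R_i,
\]
where $R_i$ collects the contributions of points of $B$ or $B'$ left unmatched (matched to the diagonal $\Delta$). The matched part is bounded by
\[
w_i L_i \sum_{x\in B}\|x-\eta(x)\|_\infty \;\le\; \tfrac{L}{N}\,W_1(B,B'),
\]
and for the unmatched part I would use the Lipschitz bound against the diagonal projection $\pi_\Delta(x)$, together with the fact that $\|x-\pi_\Delta(x)\|_\infty$ also enters the sum defining $W_1$, so the total is again of order $W_1(B,B')$. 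Taking $\|\cdot\|_\infty$ over $i$ yields the desired inequality with $C=L/N$ (up to absolute constants from the diagonal terms).

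The main obstacle I expect is precisely the treatment of the diagonal. Since the Gaussians $p_i$ centered at $\mu_i=(\tau_{i-1},\tau_i)$ do not vanish on $\Delta$, a point of $B$ paired to its diagonal projection cannot simply be discarded from the sum as in the classical persistence-image proof. To handle this cleanly I would either restrict attention to bijective perturbations (which is the natural interpretation of ``obtained by perturbing $B$ by $\epsilon$'' in the hypothesis), or absorb the diagonal contribution into the constant $C$ by using that $B$ is fixed and of finite size, so $p_i$ evaluated near $\Delta$ is controlled in terms of $\|x-\pi_\Delta(x)\|_\infty$ via the same Lipschitz estimate applied between $x$ and $\pi_\Delta(x)$. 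Either route produces a constant $C$ that depends on $N$, $\Delta\tau$, and possibly $|B|$, but crucially not on $\epsilon$, which is exactly what the statement requires.
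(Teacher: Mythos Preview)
Your approach is essentially the same as the paper's: establish a global Lipschitz bound $L_i$ for each Gaussian density $p_i$, push it through the optimal matching $\eta$ term by term, and take $C=\max_i w_i L_i$. The paper's proof is in fact shorter than yours because it silently adopts your first resolution of the diagonal issue: since $B'$ is described as ``obtained by perturbing $B$,'' the matching $\eta$ is taken to be a bijection on the $M$ off-diagonal points, so no remainder term $R_i$ ever appears and the sum $\sum_{j=1}^M\|x_j-\eta(x_j)\|_\infty$ is identified directly with $W_1(B,B')$. Your observation that the Gaussians do not vanish on the diagonal, and hence that the second route would not go through cleanly without an extra hypothesis, is a genuine subtlety that the paper does not address.
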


\begin{proof}
Let $M$ be the number of off-diagonal points in $B$. Let $\eta: B \to B'$ be the partial matching that realizes the 1-Wasserstein distance between $B$ and $B'$. For a fixed $i \in \{1,...,N\}$ we have
\begin{eqnarray}
\Bigg|v_i^{\text{ s}}(B) - v_i^{\text{ s}}(B')\Bigg| &=& \Bigg|w_i \sum_{j = 1}^{M} \left(p_i(x_j\, | \, \mu_i, \Sigma_i) - p_i(\eta(x_j)  \, | \, \mu_i,\Sigma_i)\right)\Bigg|  \nonumber \\
&\leq& w_i \sum_{j = 1}^{M} \Bigg|p_i(x_j \, | \, \mu_i,\Sigma_i) - p_i(\eta(x_j) \, | \, \mu_i,\Sigma_i)\Bigg|
\nonumber
\end{eqnarray}
As $p_i : \mathbb{R}^2 \to \mathbb{R}$ is continuously differentiable it is also Lipschitz continuous with Lipschitz constant $L_i$. We get

\begin{eqnarray}
w_i \sum_{j = 1}^{M} \Bigg|p_i(x_j\,| \, \mu_i,\Sigma_i) - p_i(\eta(x_j)\,| \, \mu_i,\Sigma_i)\Bigg| 
&\leq& w_i \sum_{j = 1}^{M} \Bigg|L_i(x_j - \eta(x_j))\Bigg| \nonumber \\
&=& w_i L_i \sum_{j = 1}^{M} \Bigg|x_j - \eta(x_j)\Bigg| \nonumber \\
&=& w_i L_i \cdot W_1(B, B') \nonumber 
\end{eqnarray}
If we let $C = \max_{i \in [1,...,N]} w_i L_i$ we have the desired result. 
\end{proof}

\begin{example}
Returning to a simplified version of the example used to show that the Betti sequence was unstable, we will now show that Theorem \ref{thm:stability} is satisfied for this example.

Let $B$ be the persistence diagram that contains two off-diagonal points: $x_1 = \left(0.25,0.5 - \frac{\epsilon}{2}\right)$ and $ x_2 = (0.75, 0.85)$ (see Figure \ref{fig:stablebetti}) for any $0 < \epsilon$.

\begin{figure}[hbt!]
\minipage{0.49\textwidth}
\centering
\begin{tikzpicture}[scale=1]
\draw[black, thick] (0,0) -- (4.5,0);
\draw[black,  thick] (0,0) -- (0,4.5);
\draw[black,  thick] (0,0) -- (4,4);

\filldraw[color=gray, fill=gray!10, ultra thin, dashed] (0,0) --  (2,2) -- (0,2) -- cycle;
\filldraw[color=gray, fill=gray!10, ultra thin, dashed] (2,2) -- (4,4) -- (2,4) -- cycle;

\filldraw[black] (1,1.95) circle (1pt) node[anchor=south] {\tiny $\left(0.25, 0.5 - \frac{\epsilon}{2}\right)$};
\filldraw[black] (2.8,3.4) circle (1pt) node[anchor=south] {\tiny $(0.75,0.8)$};

\draw[red, thick, dashed] (0,2) --  (2,2);
\draw[red, thick, dashed] (2,2) -- (4,4);
\draw[red, thick, dashed] (4,4) -- (4,4.4);
\draw[red, thick, dashed] (0,2) -- (0,4.4);

\draw[blue, thick, dashed] (0,0) -- (2,2);
\draw[blue, thick, dashed] (2,2) -- (2,4.4);
\draw[blue, thick, dashed] (0,0) -- (0,4.4);

\filldraw[black] (4.2,4.2) circle (0.7pt);
\filldraw[black] (4.3,4.3) circle (0.7pt);
\filldraw[black] (4.4,4.4) circle (0.7pt);

\filldraw[black] (0,0) circle (0.6pt) node[anchor=north] {\tiny $\tau_{0} = 0$};

\filldraw[black] (2,0) circle (0.6pt) node[anchor=north] {\tiny $\tau_{1} = 0.5$};
\filldraw[black] (4,0) circle (0.6pt) node[anchor=north] {\tiny $\tau_{2} = 1$};

\filldraw[black] (0,2) circle (0.6pt) node[anchor=east] {\tiny $\tau_{1} = 0.5$};

\filldraw[black] (0,4) circle (0.6pt) node[anchor=east] {\tiny $\tau_{2} = 1$};

\end{tikzpicture}

\endminipage\hfill
\minipage{0.49\textwidth}
\centering
\begin{tikzpicture}[scale=1]
\draw[black, thick] (0,0) -- (4.5,0);
\draw[black,  thick] (0,0) -- (0,4.5);

\draw[blue,  ultra thick] (1,1) -- (1.8,1);
\draw[blue,  ultra thick] (2.8,2) -- (3.4,2);

\filldraw[black] (0,0) circle (0.6pt) node[anchor=north] {\tiny $\tau_{0} = 0$};

\filldraw[black] (2,0) circle (0.6pt) node[anchor=north] {\tiny $\tau_{1} = 0.5$};

\filldraw[black] (4,0) circle (0.6pt) node[anchor=north] {\tiny $\tau_{2} = 1$};

\draw[gray!40, thick, dashed] (2,0) -- (2,4.4);

\draw[gray!40, thick, dashed] (4,0) -- (4,4.4);

\end{tikzpicture}
\endminipage\hfill
\caption[Stable Betti Sequence Example Persistence Diagram $B$]{Left, the persistence diagram $B$ with two persistence points $\left(0.25, 0.5 - \frac{\epsilon}{2}\right)$ and $(0.75, 0.8)$. The dashed blue lines outline the region $D_1$ used in the definition of the Betti sequence and the red dashed lines outline the region $D_2$. Right, its corresponding persistence barcode.}\label{fig:stablebetti}
\end{figure}

The stable Betti sequence for the persistence diagram $B$, using maximum filtration 1 and two subintervals, is easily computed as the vector $\vec{v}^S(B) = \langle v^S_1, v^S_2 \rangle ^T$ where
$$ v^S_1 = \frac{1}{ 2\pi} \left[ \exp\left(- \frac{1}{16} - \frac{\epsilon^2}{4} \right) + \exp\left(- \frac{549}{400}\right) \right]$$
$$v^S_2 = \frac{1}{ 2\pi} \left[  \exp\left(- \frac{1}{16} - \frac{\epsilon^2}{4} \right) + \exp\left(- \frac{41}{400}\right) \right]$$
Now let $B'$ be the persistence diagram, pictured in Figure \ref{fig:stablebettiBprime}, that contains two off-diagonal points: $y_1 = \left(0.25,0.5 + \frac{\epsilon}{2}\right)$ and $y_2 = (0.75,0.8)$, for any $0 < \epsilon$.

\begin{figure}[hbt!]
\minipage{0.49\textwidth}
\centering

\begin{tikzpicture}[scale=1]
\draw[black, thick] (0,0) -- (4.5,0);
\draw[black,  thick] (0,0) -- (0,4.5);
\draw[black,  thick] (0,0) -- (4,4);

\filldraw[color=gray, fill=gray!10, ultra thin, dashed] (0,0) --  (2,2) -- (0,2) -- cycle;
\filldraw[color=gray, fill=gray!10, ultra thin, dashed] (2,2) -- (4,4) -- (2,4) -- cycle;

\draw[red, thick, dashed] (0,2) --  (2,2);
\draw[red, thick, dashed] (2,2) -- (4,4);
\draw[red, thick, dashed] (4,4) -- (4,4.4);
\draw[red, thick, dashed] (0,2) -- (0,4.4);

\draw[blue, thick, dashed] (0,0) -- (2,2);
\draw[blue, thick, dashed] (2,2) -- (2,4.4);
\draw[blue, thick, dashed] (0,0) -- (0,4.4);

\filldraw[black] (1,2.05) circle (1pt) node[anchor=south] {\tiny $\left(0.25,0.5 +\frac{\epsilon}{2}\right)$};
\filldraw[black] (2.8,3.4) circle (1pt) node[anchor=south] {\tiny $(0.75,0.8)$};

\filldraw[black] (4.2,4.2) circle (0.7pt);
\filldraw[black] (4.3,4.3) circle (0.7pt);
\filldraw[black] (4.4,4.4) circle (0.7pt);

\filldraw[black] (0,0) circle (0.6pt) node[anchor=north] {\tiny $\tau_{0}=0$};

\filldraw[black] (2,0) circle (0.6pt) node[anchor=north] {\tiny $\tau_{1}=0.5$};
\filldraw[black] (4,0) circle (0.6pt) node[anchor=north] {\tiny $\tau_{2}=1$};

\filldraw[black] (0,2) circle (0.6pt) node[anchor=east] {\tiny $\tau_{1}=0.5$};

\filldraw[black] (0,4) circle (0.6pt) node[anchor=east] {\tiny $\tau_{2}=1$};

\end{tikzpicture}
\endminipage\hfill
\minipage{0.49\textwidth}
\centering

\begin{tikzpicture}[scale=1]
\draw[black, thick] (0,0) -- (4.5,0);
\draw[black,  thick] (0,0) -- (0,4.5);

\draw[blue,  ultra thick] (1,1) -- (2.1,1);
\draw[blue,  ultra thick] (2.8,2) -- (3.4,2);

\filldraw[black] (0,0) circle (0.6pt) node[anchor=north] {\tiny $\tau_{0}=0$};

\filldraw[black] (2,0) circle (0.6pt) node[anchor=north] {\tiny $\tau_{1}=0.5$};

\filldraw[black] (4,0) circle (0.6pt) node[anchor=north] {\tiny $\tau_{2}=1$};

\draw[gray!40, thick, dashed] (2,0) -- (2,4.4);

\draw[gray!40, thick, dashed] (4,0) -- (4,4.4);

\end{tikzpicture}

\endminipage\hfill
\caption[Stable Betti Sequence Example Persistence Diagram $B'$]{Left, the persistence diagram $B'$ with two persistence points $\left(0.25, 0.5 + \frac{\epsilon}{2}\right)$ and $(0.75, 0.8)$. The dashed blue lines outline the region $D_1$ used in the definition of the Betti sequence and the red dashed lines outline the region $D_2$. Right, its corresponding persistence barcode.}\label{fig:stablebettiBprime}
\end{figure}
The stable Betti sequence for the persistence diagram $B'$, using maximum filtration 1 and two subintervals, is also easily computed as the vector $\vec{v}^{S} (B') = \langle v^S_1, v^S_2 \rangle ^T$ where
$$ v^S_1 = \frac{1}{ 2\pi} \left[ \exp\left(- \frac{1}{16} - \frac{\epsilon^2}{4} \right) + \exp\left(- \frac{549}{400}\right) \right]$$
$$v^S_2 = \frac{1}{ 2\pi} \left[  \exp\left(- \frac{5}{16} + \frac{\epsilon}{2} - \frac{\epsilon^2}{4} \right) + \exp\left(- \frac{41}{400}\right) \right]$$
\bigskip
Thus, we obtain 
$$
\vec{v}^S(B) - \vec{v}^S(B') = \begin{bmatrix} 0\\
\frac{1}{2\pi} \left[ \exp\left(- \frac{1}{16} - \frac{\epsilon^2}{4} \right) -\exp\left(- \frac{5}{16} + \frac{\epsilon}{2} - \frac{\epsilon^2}{4} \right)\right]
\end{bmatrix}$$

Note that for all $0 < \epsilon$, the absolute value of the first entry in the vector above is less than or equal to that of the second entry in the vector. Thus we obtain
$$
|| \vec{v} - \vec{v} \, ' ||_{\infty} = \frac{1}{2\pi} \left[ \exp\left(- \frac{1}{16} - \frac{\epsilon^2}{4} \right) -\exp\left(- \frac{5}{16} + \frac{\epsilon}{2} - \frac{\epsilon^2}{4} \right)\right]$$

\begin{figure}[hbt!]
  \centering
	\includegraphics[width=0.6\textwidth]{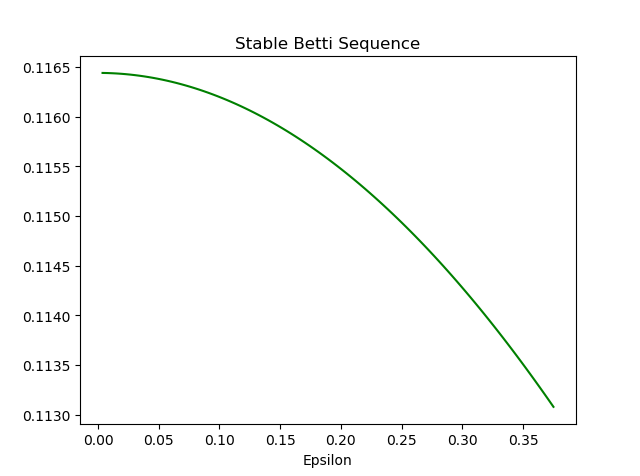}
\caption[Stable Interconnectivity Vector Example]{The plot of the ratio of the $L_{\infty}$-norm of the change in the stable Betti sequence to the 1-Wasserstein distance between persistence diagrams $B$ and $B'$.}\label{fig:stableexample}
\end{figure}
Recall, in order to satisfy Theorem \ref{thm:stability} we need
\begin{align*}
 ||\vec{v}^S(B) - \vec{v}^S(B')||_{\infty} &\leq C \cdot W_1(B, B') \\
 &= C ||\left(0.25, 0.5 - \frac{\epsilon}{2}\right) - \left(0.25, 0.5 +\frac{\epsilon}{2}\right)||_{\infty}\\ &= C \epsilon
\end{align*}
This means that we need to find a constant $C$ such that
$$ \frac{1}{\epsilon}\cdot||\vec{v}^S(B) - \vec{v}^S(B')||_{\infty} \leq C$$
Figure \ref{fig:stableexample} contains the graph of the left hand side of the inequality above versus $\epsilon$. The graph attains a maximum value of approximately $
0.116440243790144$ as $\epsilon$ goes to 0.
Thus there exists $C$, say,  $C = 0.2$, so that 
$$||\vec{v}^S(B) - \vec{v}^S(B')||_{\infty} \leq C \epsilon$$
and Theorem \ref{thm:stability} is satisfied.

\end{example}

\section{Numerical example}

For the numerical experiments, we first consider four point clouds: a uniform lattice of points with a small uniform random coordinate perturbation of magnitude at most $1/N$ on $[0,1]^2$, a uniformly random distribution on $[0,1]^2$, points drawn from the Sierpinski triangle created by the {\it chaos game} \cite{barnsley:1988} and a uniformly random distribution with a square hole of $[0.15, 0.85]^2$ on $[0,1]^2$. A sample point cloud for each type is shown in Figure \ref{clouds}. 
\begin{figure}[hbt!]
  \centering
  	 \includegraphics[width=0.49\linewidth]{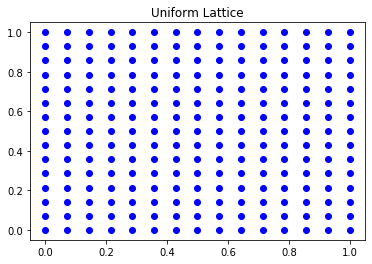}
    \includegraphics[width= 0.49\linewidth]{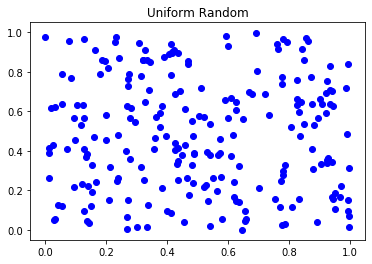}
	 \includegraphics[width=0.49\linewidth]{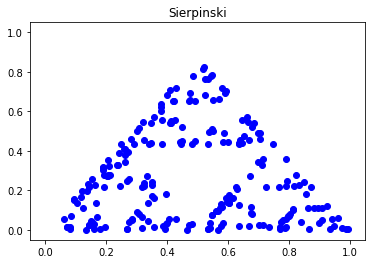}
	  \includegraphics[width=0.49\linewidth]{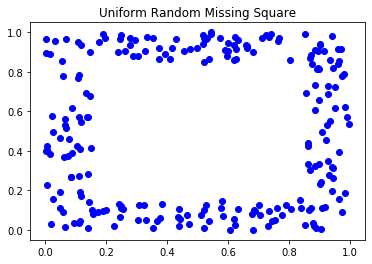}
\caption{Top: a uniform lattice of points with a small uniform random coordinate perturbation of magnitude at most $1/N$ and a uniform random distribution on $[0,1]^2$. Bottom: points drawn from the Sierpinski triangle created by the chaos game and a uniformly random distribution with a square hole of $[0.15,0.85]^2$ removed on $[0,1]^2$.}\label{clouds}
\end{figure}

Figure \ref{new1} shows a sample persistence barcode and diagram for the uniform random data (left) and Sierpinski data (right) for both zero-dimensional (red) and one-dimensional homology (blue). We observe the difference in each barcode and diagram between the uniform and Sierpinski data. Here we note that the Sierpinski data is generated with the uniform random sampling within the chaos game. 
\begin{figure}[hbt!]
\begin{center}
    \includegraphics[width=0.49\linewidth]{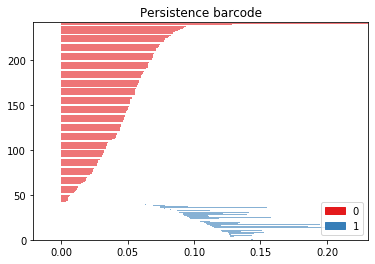}
             \includegraphics[width=0.49\linewidth]{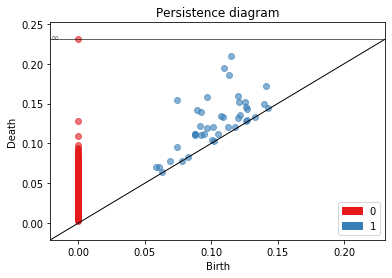}
     \includegraphics[width=0.49\linewidth]{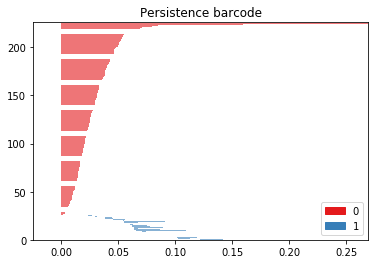}
     \includegraphics[width=0.49\linewidth]{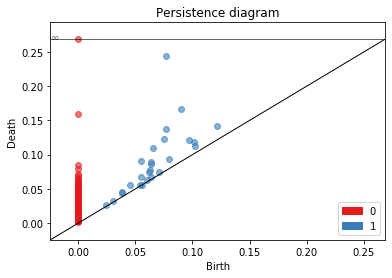}
\end{center}
\caption{Persistence barcode and diagram. Top: Uniform data. Bottom: Sierpinski data. }\label{new1}
\end{figure}

We will show the instability of the original Betti sequence and compare the results with the stable Betti sequence. For the stable Betti sequence, in the absence of an ideal covariance matrix $\Sigma_i$ for each Gaussian $G_i$, we use the adapted Gaussian-smoothing approach as defined below: 
Consider the following set, $X_i$ 
$$
   X_i =  \{ x_j \vert  \tau_{i} < b_j < \tau_i + \gamma \Delta \tau\ \}
   \cup
   \{ x_j  \vert \tau_{i-1}-\gamma \Delta \tau <  d_j <  \tau_{i-1}\}
$$
where $\gamma > 0$ is a positive constant. Since the instability we described above is induced near the domain boundary, we consider points outside $D_i$ but near the boundary of $D_i$. For the numerical example, we consider a sharp Gaussian such that all the points in $D_i \cup X_i$ participate in the Betti sequence. Since the instability is more sensitive to the lower indices of the Betti sequence, we choose the free parameter $\gamma$ as below (notice that if $\gamma = 0$, it reduces to the original Betti sequence)
$$
    \gamma = (N - i+1)/10, \quad i = 1, \cdots, N 
$$
%
%
%
%
%
Let ${\tilde D}_i$ be $X_i \cup D_i$ and $|{\tilde D}_i|$ be the cardinality of ${\tilde D}_i$. Then define a new stable Betti sequence $v^{new}(B)$ as 
$$
      {\vec v}^{new}(B) = ( |{\tilde D}_1|,  |{\tilde D}_2|, \cdots,  |{\tilde D}_N|)^T
$$
${\vec v}^{new}(B)$ is the original Betti sequence with each domain extended by $\gamma \Delta \tau$, which can also be viewed as the Betti sequence Gaussian-smoothed with a sharp truncation near the domain boundary. 
Further, we define the cumulative Betti sequence, $ {\vec v}^{cum}(B)$ of $ {\vec v}(B)$ recursively as
\begin{eqnarray}
&&{\vec v}^{cum}_1(B) =  {\vec v}_1(B) \nonumber \\
&&{\vec v}^{cum}_i(B)  = {\vec v}_i(B)  + {\vec v}^{cum}_{i-1}(B), \quad i \ge 2 \nonumber
\end{eqnarray}
Then the normalized cumulative vector $ {\hat v}^{cum}(B)$ is defined as  
$$
 {\hat v}^{cum}(B)  =  \frac{ {\vec v}^{cum}(B) }{ || {\vec v}^{cum}(B) ||_\infty}
$$

We first show the instability of the Betti sequence. Consider the interval, $(x,y) \in \Omega = [0, L_x] \times [0, L_y]  := [0, 1+ \epsilon] \times [0,1+ \epsilon]$, where $\epsilon \ll 1$. We consider a uniform lattice of points with the total number of bins for the Betti sequence, $N = 20$, and the total number of data points, $225$. We rescale the lattice by mapping $\Omega$ to $\Omega \times 14/N$ while keeping the filtration interval $\Delta \tau = 1/N$. For this case, the shortest lattice interval becomes the same as the filtration interval $\Delta \tau = 1/N = 1/20$ when $\epsilon = 0$ and the corresponding normalized cumulative Betti sequence becomes 
$$
   {\hat v}^{cum}(B) = <0, 1, 1, \cdots, 1>
$$
because the shortest lattice interval coincides with the domain interval, $\Delta \tau$, in the persistent diagram. However, if $\epsilon < 0$, the shortest interval becomes less than $\Delta \tau$ and so the corresponding Betti sequence is
$$
   {\hat v}^{cum}(B) = <0.5, 1, 1, \cdots, 1>
$$
Here note that if $\epsilon >0$ the birth of all the bars in the barcode remain in the second domain and the death of all the bars still remain in the second domain because the perturbation $\epsilon$ is chosen small enough. Thus if the perturbation $\epsilon$ is small enough, the first element of the Betti sequence has the value of $0$ or $0.5$ while the same element in the stable Betti sequence remains almost same under the small perturbation $\epsilon$. 
The left figure in Figure \ref{new4} shows the first element of the Betti sequence versus $\epsilon$, blue for the Betti sequence and red for the stable Betti sequence with the perturbation $ \epsilon \in (-10^{-8}, 10^{8})$ -- $100$  values of $\epsilon$ were chosen uniformly. The right figure shows the same plot for the uniform random data whose domain is also $[0, 1+\epsilon]\times[0, 1+\epsilon]$. As shown in the figure, the original Betti sequence fluctuated between $0$ and $0.5$ as expected in the left figure and it also fluctuates more than the stable Betti sequence in the right figure. 
\begin{figure}[hbt!]
\begin{center}
    \includegraphics[width=0.4\linewidth]{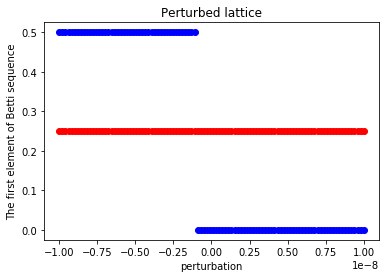}
     \includegraphics[width=0.4\linewidth]{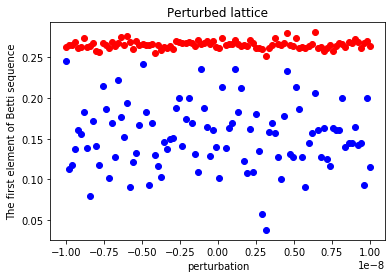}
           \end{center}
\caption{$v_1$, the first element in the sequence, versus $\epsilon$.  Blue: Betti sequence. Red: Stable Betti sequence. Left: Perturbed lattice with $\epsilon$. Right: Uniform random data.}\label{new4}
\end{figure}
Now in Figure \ref{new5} we show both original Betti sequence (blue) and stable Betti sequence (red) for the four cases shown in Figure \ref{clouds} with the fixed domain size $[0, 1]\times[0,1]$ and $N = 20$. 

 We use $100$ samples for each case and plot all the Betti sequences. The figure shows that the stable Betti sequences yield more sharp patterns while maintaining similar vector structure overall. In addition, in each data set the stable Betti sequences are more homogeneous than the Betti sequences which is promising with regards to possible machine learning applications.
\begin{figure}[hbt!]
\begin{center}
    \includegraphics[width=0.49\linewidth]{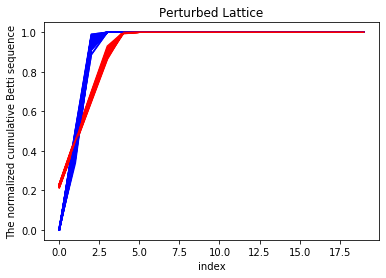}
     \includegraphics[width=0.49\linewidth]{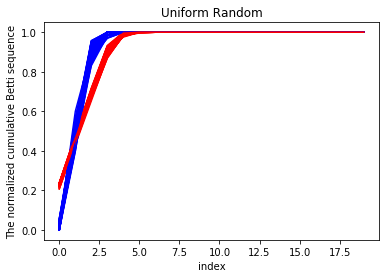}
     \includegraphics[width=0.49\linewidth]{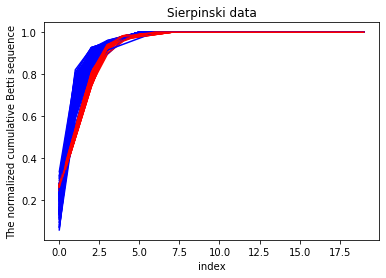}
     \includegraphics[width=0.49\linewidth]{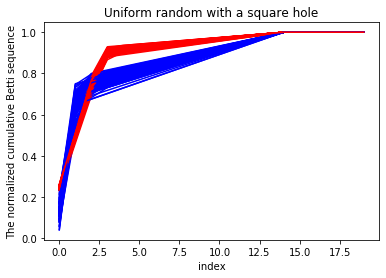}
           \end{center}
           \vskip -.2in
\caption{The Betti sequence (blue) and stable Betti sequence (red) with fixed domain size, $[0,1]\times[0,1]$. Top: a lattice with a small perturbation and a uniform random data. Bottom: a Sierpinski data and a uniform random with a square hole. Each point sequence was computed with $N = 20$ and there were $225$ points in each data set.}\label{new5}
\end{figure}

\section{Concluding Remarks}
Topological data analysis and its main tool, persistent homology, has recently gained attention in the scientific community and has proven to be highly useful in various applications. Recently, sizable research has been conducted to combine topological data analysis and machine learning. However, the representations of persistent homology, the persistence diagram and barcode, in their raw forms are not suitable to incorporate into a machine learning workflow and proper feature maps are necessary including vectorization methods. 
In this paper, we considered the Betti sequence, as a vectorization and showed by example its instability with respect to the 1-Wasserstein distance. In addition, we proposed a stable Betti sequence and proved its stability. With numerical examples, we devised a cumulative stable Betti sequence and showed that the stable Betti sequence was able to achieve a faithful representation of the Betti sequence in that it performs better with the smaller number of the filtration intervals at distinguishing data sets. 
%
Our future research will incorporate the proposed stable Betti sequence into machine learning algorithms to study its effectiveness in various applications.

%
%
%
%
%
%
%
%
%

\section*{Acknowledgments}
MJ was funded, in part, by the Doctoral Dissertation Fellowship of the Department of Mathematics at the University at Buffalo. JHJ  has been supported by Samsung Science \& Technology Foundation under grant number SSTF-BA1802-02.

\bibliography{References}{}

\begin{thebibliography}{10}

\bibitem{adams2015persistence}
Henry Adams, Tegan Emerson, Michael Kirby, Rachel Neville, Chris Peterson,
  Patrick Shipman, Sofya Chepushtanova, Eric Hanson, Francis Motta, and Lori
  Ziegelmeier.
\newblock Persistence images: A stable vector representation of persistent
  homology.
\newblock {\em Journal of Machine Learning Research}, 18(8):1--35, 2017.

\bibitem{barnsley:1988}
Michael Barnsley.
\newblock {\em Fractals everywhere}.
\newblock Academic Press Inc., Boston, MA, 1988.

\bibitem{bendich2016}
Paul Bendich, J.~S. Marron, Ezra Miller, Alex Pieloch, and Sean Skwerer.
\newblock Persistent homology analysis of brain artery trees.
\newblock {\em Ann. Appl. Stat.}, 10(1):198--218, 03 2016.

\bibitem{carlsson:2009}
Gunnar Carlsson.
\newblock Topology and data.
\newblock {\em Bulletin of The American Mathematical Society}, 46:255--308, 04
  2009.

\bibitem{carriere:3Dshapes}
Mathieu Carrière, Steve~Y. Oudot, and Maks Ovsjanikov.
\newblock Stable topological signatures for points on 3d shapes.
\newblock {\em Computer Graphics Forum}, 34(5):1--12, 2015.

\bibitem{chung2020persistence}
Yu-Min Chung and Austin Lawson.
\newblock Persistence curves: A canonical framework for summarizing persistence
  diagrams.
\newblock 2020.

\bibitem{cole_shiu}
Alex Cole and Gary Shiu.
\newblock Persistent homology and non-gaussianity.
\newblock {\em arXiv preprint arXiv:1712.08159}, 2017.

\bibitem{Edelsbrunner2002}
{Edelsbrunner}, {Letscher}, and {Zomorodian}.
\newblock Topological persistence and simplification.
\newblock {\em Discrete {\&} Computational Geometry}, 28(4):511--533, Nov 2002.

\bibitem{heydenreich2020}
Sven Heydenreich, Benjamin Brück, and Joachim Harnois-Déraps.
\newblock Persistent homology in cosmic shear: constraining parameters with
  topological data analysis.
\newblock {\em arXiv preprint arXiv:2007.13724}, 2020.

\bibitem{johnson:2020}
Megan Johnson and Jae-Hun Jung.
\newblock The interconnectivity vector: A finite-dimensional representation of
  persistent homology.
\newblock {\em arXiv preprint arXiv:2011.11579}, 2020.

\bibitem{McGuirl2020}
Melissa~R. McGuirl, Alexandria Volkening, and Bj{\"o}rn Sandstede.
\newblock Topological data analysis of zebrafish patterns.
\newblock {\em Proceedings of the National Academy of Sciences},
  117(10):5113--5124, 2020.

\bibitem{Nicponski2020}
John Nicponski and Jae-Hun Jung.
\newblock Topological data analysis of vascular disease: I a theoretical
  framework.
\newblock {\em Frontiers in Applied Mathematics and Statistics}, 6:34, 2020.

\bibitem{Sizemore:2019}
Ann~E. Sizemore, Jennifer~E. Phillips-Cremins, Robert Ghrist, and Danielle~S.
  Bassett.
\newblock The importance of the whole: Topological data analysis for the
  network neuroscientist.
\newblock {\em Network Neuroscience}, 3(3):656--673, 2019.

\bibitem{Umeda2017}
Yuhei Umeda.
\newblock Time series classification via topological data analysis.
\newblock {\em Information and Media Technologies}, 12:228--239, 2017.

\bibitem{wasserstein:1969}
L.~N. Vaserstein.
\newblock Markov processes over denumerable products of spaces, describing
  large systems of automata.
\newblock {\em Problems Inform. Transmission}, 5(3):47--52, 1969.

\bibitem{Xu2019}
X.~Xu, J.~Cisewski-Kehe, S.B. Green, and D.~Nagai.
\newblock Finding cosmic voids and filament loops using topological data
  analysis.
\newblock {\em Astronomy and Computing}, 27:34 -- 52, 2019.

\bibitem{zielinski2018persistence}
Bartosz Zieli{\'{n}}ski, Micha{\l} Lipi{\'{n}}ski, Mateusz Juda, Matthias
  Zeppelzauer, and Pawe{\l} D{\l}otko.
\newblock Persistence codebooks for topological data analysis.
\newblock {\em Artificial Intelligence Review}, Sep 2020.

\end{thebibliography}
\bibliographystyle{plain}

%
%
%


\end{document}